\documentclass{article}
\usepackage[utf8]{inputenc}

\usepackage{authblk}
\usepackage{bm}
\usepackage{amsthm}
\usepackage{amsmath}
\usepackage{amssymb}
\usepackage{mathrsfs}
\usepackage{multirow}
\usepackage{multicol}
\usepackage[maxbibnames=10,minbibnames=3,maxalphanames=7,giveninits, doi=false, url=false, isbn=false,eprint=false]{biblatex}
\usepackage{enumerate}
\usepackage{url}
\usepackage{here}
\usepackage{appendix}
\usepackage{algorithm,algorithmic}

\def\eps{\varepsilon} 
 
\def\bmc{{\bm c}}

\def\bms{{\bm s}} 
\def\bmr{{\bm r}}   
  
 \def\cE{\mathcal{E}}

 \def\cM{{\mathcal M}}   
  \def\cS{{\mathcal S}}

\def\cX{{\mathcal X}}   

\def\bbZ{\mathbb{Z}}

\def\-{\mathchar`-}

\DeclareMathOperator\supp{supp}

\newtheorem{thm}{Theorem}[section]

\newtheorem{lem}[thm]{Lemma}

\AtBeginDvi{}

\numberwithin{equation}{section}

\DeclareMathOperator{\row}{{\mathrm{row}}}
\DeclareMathOperator{\col}{{\mathrm{col}}}

\DeclareMathOperator{\Mult}{Mult}
\DeclareMathOperator{\mult}{mult}

\title{Identities that represent powers of positive integers using multinomial coefficients}
\author{Shoichi Kamada\thanks{This work was supported by JST CREST Grant Number
JPMJCR2113, Japan.}}

\affil{University of Tsukuba, Japan\\
\texttt{kamada.shoichi.ft@u.tsukuba.ac.jp}}
\date{}

\begin{document}

\maketitle

\begin{abstract}
    In this paper, we show combinatorial identities that represent powers of positive integers using multinomial coefficients, 
    which do not come from the multinomial theorem and the multinomial Vandermonde's convolution.
\end{abstract}

\section{Introduction}

\subsection{Background}

The multinomial theorem states that for a positive integer $m$ and variables $x_{1},\ldots,x_{s}$, it holds that
\begin{equation}\label{eq:multinomial_theorem}
    (x_{1}+\cdots+x_{s})^{m}=\sum_{r_{1},\ldots,r_{s}}\binom{m}{r_{1},\ldots,r_{s}}x_{1}^{r_{1}}\cdots x_{s}^{r_{s}}, 
\end{equation}
where $r_{1},\ldots,r_{s}$ in the sum are taken over all nonnegative integers, and a multinomial coefficient $\binom{m}{r_{1},\ldots,r_{s}}$ is defined by
\[
\binom{m}{r_{1},\ldots,r_{s}}:=
\begin{cases}
\frac{m!}{r_{1}!\cdots r_{s}!} &\text{if $r_{1}+\cdots+ r_{s}=m$,}\\
0 & \text{otherwise.}
\end{cases}
\]
Substituting $x_{1}=\cdots=x_{s}=1$ in \eqref{eq:multinomial_theorem}, 
we have 
\begin{equation}\label{eq:one_multinomial_coefficient_previous}
s^{m}=\sum_{r_{1},\ldots,r_{s}}\binom{m}{r_{1},\ldots,r_{s}}.
\end{equation}
Taking a look at the coefficient of $x^{r_{1}}\cdots x^{r_{s}}$ in 
\[
(x_{1}+\cdots+x_{s})^{m_{1}+m_{2}}=(x_{1}+\cdots+x_{s})^{m_{1}}(x_{1}+\cdots+x_{s})^{m_{2}},
\]
the multinomial Vandermonde's convolution holds, i.e. 
    \[
    \binom{m_{1}+m_{2}}{r_{1},\ldots,r_{s}}=\sum_{k_{1},\ldots,k_{s}}\binom{m_{1}}{k_{1},\ldots,k_{s}}\binom{m_{2}}{r_{1}-k_{1},\ldots,r_{s}-k_{s}}.
    \]
Taking a summation on $r_{1},\ldots,r_{s}$, it follows from the multinomial theorem and \eqref{eq:one_multinomial_coefficient_previous} that
\begin{equation}\label{eq:two_multinomial_coefficients_previous}
s^{m}=\sum_{r_{1},\ldots,r_{s},k_{1},\ldots,k_{s}}\binom{m_{1}}{k_{1},\ldots,k_{s}}\binom{m_{2}}{r_{1}-k_{1},\ldots,r_{s}-k_{s}}.
\end{equation}
From identities \eqref{eq:one_multinomial_coefficient_previous} and \eqref{eq:two_multinomial_coefficients_previous}, we found that a power of a positive integer $s^{m}$ can be represented as a sum of multinomial coefficients or a sum of products of two multinomial coefficients. 

Some literatures on the classical and multinomial Vandermonde's convolution can be found in 
\cite{concrete_mathematics_1994,Vandermonde_type_identity_2014, multinomial_vandermonde_convolution_permanent_2021}. 
Another type identity that represents a multinomial coefficient using multinomial coefficients can be found in \cite{multinomial_identity_2022}. 
Generalizations of classical and multinomial Vandermonde's convolution can be found in \cite{generalizations_vandermonde_convolution_Gould_1956} and \cite{q_multinomial_vandermonde_convolution_SULANKE_1981}, respectively.
\subsection{Contribution}
Our contribution is to give identities that represent $s^{m}$ as sums using multinomial coefficients, which do not come from the multinomial theorem.
Let $K$ denote a nonempty set. 
For positive integers $m$ and $n$, 
let $\cM_{m,n}(K)$ denote the set of $m$-by-$n$ matrices whose components are in $K$. 
For a positive integer $s$, let $V$ denote a subset of $K^{n}$ such that $|V|=s$, 
and let $\cX\subseteq \cM_{m,n}(K)$ denote the set
\[
\{{}^{t}({}^{t}\bmr_{1},\ldots,{}^{t}\bmr_{m})\in\cM_{m,n}(K)\colon \bmr_{i} \in V\}.
\]
Then we have the trivial identity
\begin{equation}\label{eq:trivial_identity}
s^{m}=|\cX|.
\end{equation}
We will define some equivalence relations $\sim$ and sets $\cX\big/{\sim}$ of equivalence classes $[A]$. 
Then it is clear that the following identity holds.
\begin{equation}\label{eq:basic_identity}
s^{m}=\sum_{[A]\in\cX\big/\sim}|[A]|.
\end{equation}
Since we may take $K=\{1,\ldots,s\}$
and $V=\{(i,\ldots,i) \colon i\in K\}\subseteq K^{n}$, 
the arbitrariness of positive integers $s$ and $m$ is guaranteed in \eqref{eq:basic_identity}.
Our contribution is to give concrete definitions of equivalence relations $\sim$ and write the cardinality $|[A]|$ 
using one (or two) multinomial coefficient(s), where our identities require a set $V$ to be symmetrical in a group theoretical sense.

\paragraph*{Notation and terminology:}

For a multiset $S$, let $\supp(S)$ denote the support of $S$, i.e.  $\supp(S)$ is the set consisting of elements in $S$. 
For an element $a$, 
let $\mult_{S}(a)$ denote the multiplicity of $a$ in $S$. 
Let $\Mult(S)$ denote the multiset $\{\mult_{S}(a)\colon a\in\supp(S)\}$ of cardinality $|\supp(S)|$.
For a positive integer~$m$ and nonnegative integers $r_{1},\ldots,r_{q}$, 
let $S=\{r_{1}\ldots,r_{q}\}$ be a multiset of cardinality~$q$. 
Then a multinomial coefficient is defined as
\[
\binom{m}{S}:=
\begin{cases}
    \frac{m!}{r_{1}!\cdots r_{q}!} & \text{if $r_{1}+\cdots+r_{q}=m$,}\\
    0 & \text{otherwise.}
\end{cases}
\]
For a positive integer $n$, let $\cS_{n}$ denote the symmetric group on $\{1,\ldots,n\}$. 
A finite set $V\subseteq K^{n}$ is $\cS_{n}$-invariant if $\cS_{n}V=V$, 
where $\cS_{n}V$ is defined as a set $\{(r_{\sigma(1)},\ldots,r_{\sigma(n)})\in K^{n}\colon (r_{1},\ldots,r_{n})\in V,\, \sigma\in \cS_{n}\}$.

\section{Identities using sums of multinomial coefficients}

For a matrix $A=(\bmc_{1},\ldots,\bmc_{n})$, let $\col(A)$ denote the multiset $\{\bmc_{1},\ldots,\bmc_{n}\}$.
For $m$-by-$n$ matrices $A, B\in \cX$, 
a relation $A\overset{\mathrm{c}}{\sim}B$ holds if $\col(A)=\col(B)$.
Let $\cX/{\overset{\mathrm{c}}{\sim}}$ denote the set of equivalence classes in $\cX$ with respect to the equivalence relation $\overset{\mathrm{c}}{\sim}$,  
and $[A]_{\mathrm{c}}$ denote an equivalence class in $\cX/{\overset{\mathrm{c}}{\sim}}$ for a matrix $A\in \cX$.

\begin{thm}\label{thm:col_identity}
    For any positive integers $m$ and $s$ and any nonempty set $K$, there are a positive integer $n$ and an $\cS_{n}$-invariant subset $V\subseteq K^{n}$ with $|V|=s$ such that
    \[
    s^{m}=\sum_{[A]_{\mathrm{c}}\in \cX/{\overset{\mathrm{c}}{\sim}}}\binom{n}{\Mult(\col(A))}.
    \]
\end{thm}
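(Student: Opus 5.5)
The plan is to show that, once $V$ is $\cS_{n}$-invariant, each class $[A]_{\mathrm{c}}$ is exactly an orbit of the column-permutation action of $\cS_{n}$ on $\cX$. The orbit--stabilizer theorem then gives the multinomial coefficient, and \eqref{eq:basic_identity} finishes the proof. Separately, I will construct a suitable $\cS_{n}$-invariant $V$ of size $s$.

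\emph{Step 1: the action.} For $\sigma\in\cS_{n}$ and $A=(\bmc_{1},\ldots,\bmc_{n})\in\cX$, set $\sigma A:=(\bmc_{\sigma(1)},\ldots,\bmc_{\sigma(n)})$. Each row of $\sigma A$ is $(r_{\sigma(1)},\ldots,r_{\sigma(n)})$ for a row $\bmr=(r_{1},\ldots,r_{n})\in V$ of $A$. Since $V$ is $\cS_{n}$-invariant, this row lies in $V$, so $\sigma A\in\cX$. Hence $\cS_{n}$ acts on $\cX$.

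\emph{Step 2: classes are orbits.} Clearly $\col(\sigma A)=\col(A)$. Conversely, if $\col(B)=\col(A)$, then the columns of $B$ are a rearrangement of those of $A$, so $B=\sigma A$ for some $\sigma$. Thus $[A]_{\mathrm{c}}=\cS_{n}A$.

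\emph{Step 3: orbit size.} The stabilizer of $A$ consists of the permutations that only permute positions carrying equal columns. It is therefore a product of symmetric groups of orders $\mult_{\col(A)}(\bmc)!$ for $\bmc\in\supp(\col(A))$. Hence
\[
|[A]_{\mathrm{c}}|=\frac{n!}{\prod_{\bmc}\mult_{\col(A)}(\bmc)!}=\binom{n}{\Mult(\col(A))},
\]
where the multiplicities sum to $n$. Substituting this into \eqref{eq:basic_identity} gives the identity.

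\emph{Step 4: existence of $V$.} This is the only real obstacle, and it depends on $K$.
\begin{itemize}
\item If $s=1$, take $V=\{(a,\ldots,a)\}$ for any $a\in K$; this works for every $n$.
\item If $s\ge 2$, we need $|K|\ge 2$. This is a genuine restriction: for $K=\{a\}$ we have $|K^{n}|=1$, so no $V$ with $|V|=s$ exists. The statement must therefore be read with $|K|\ge 2$ whenever $s\ge 2$.
\end{itemize}
Assume now $s\ge 2$ and pick distinct $a,b\in K$. Take $n=s-1$ and let $V$ consist of $(a,\ldots,a)$ together with the $n$ vectors having exactly one entry equal to $b$ and all others equal to $a$. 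This $V$ is a union of two $\cS_{n}$-orbits, hence $\cS_{n}$-invariant, and $|V|=1+(s-1)=s$.

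If $|K|\ge s$, one may alternatively take $s$ constant vectors, as in the introduction. I expect that the main care is needed in this existence step, including the degenerate case $|K|=1$, rather than in the orbit counting of Steps 1--3.
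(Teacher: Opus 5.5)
Your proof is correct and follows the paper's argument. The paper likewise identifies $[A]_{\mathrm{c}}$ with the set $\{(A_{i,\sigma(j)})\colon\sigma\in\cS_{n}\}$ of column permutations of $A$ and reads off its size as $\binom{n}{\Mult(\col(A))}$; you additionally make explicit the use of $\cS_{n}$-invariance (to stay inside $\cX$) and the orbit--stabilizer count. Your existence step is more careful than the paper's, which only points to $K=\{1,\ldots,s\}$ with constant vectors in the introduction: your construction with $n=s-1$ works for every $K$ with $|K|\ge 2$, and you are right that the statement as written fails when $|K|=1$ and $s\ge 2$.
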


\begin{proof}[Proof of Theorem \ref{thm:col_identity}]
    For a matrix $A=(A_{i,j})\in \cX$, 
    it holds that
    \[
    \left|[A]_{\mathrm{c}}\right|=|\{(A_{i,\sigma(j)})\colon \sigma\in\cS_{n}\}|=\binom{n}{\Mult(\col(A))} 
    \]
    for each $A\in \cX$, 
    where the argument of the cardinality $|\cdot|$ in the second expression above is a set. 
    Since we can consider this for any $[A]_{\mathrm{c}}\in \cX/{\overset{\mathrm{c}}{\sim}}$, we complete the proof.
\end{proof}

For a vector $\bmr=(r_{1},\ldots,r_{n})$, let $f(\bmr)$ denote a multiset $\{r_{1},\ldots,r_{n}\}$ of cardinality $n$, i.e. $f(\bmr)=\{r_{1},\ldots,r_{n}\}$.
For a matrix $A={}^{t}({}^{t}\bmr_{1},\ldots,{}^{t}\bmr_{m})$, 
let $\row_{f}(A)$ denote the multiset $\{f(\bmr_{1}),\ldots, f(\bmr_{m})\}$. 
For $m$-by-$n$ matrices $A,B\in\cX$, 
a relation $A\overset{\mathrm{r}}{\sim}B$ holds if $\row_{f}(A)=\row_{f}(B)$.
Let $\cX/{\overset{\mathrm{r}}{\sim}}$ denote the set of equivalence classes in $\cX$ with respect to the equivalence relation $\overset{\mathrm{r}}{\sim}$,  
and $[A]_{\mathrm{r}}$ denote an equivalence class in $\cX/{\overset{\mathrm{r}}{\sim}}$ for a matrix $A\in \cX$. 

\begin{thm}\label{thm:row_f_identity}
    For any positive integers $m$ and $s$ and any nonempty set $K$, there are a positive integer $n$ and an $\cS_{n}$-invariant subset $V\subseteq K^{n}$ with $|V|=s$ such that
    \[
    s^{m}=\sum_{[A]_{\mathrm{r}}\in \cX/{\overset{\mathrm{r}}{\sim}}}\binom{m}{\Mult(\row_{f}(A))}.
    \]
\end{thm}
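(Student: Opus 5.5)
The plan is to choose $V$ so that the map $\bmr\mapsto f(\bmr)$ is injective on $V$. Then a matrix $A\in\cX$ is determined by the sequence of multisets $f(\bmr_{1}),\ldots,f(\bmr_{m})$ of its rows, and $[A]_{\mathrm{r}}$ consists exactly of all rearrangements of that sequence. Injectivity must be compatible with $\cS_{n}$-invariance. If $\bmr\in V$ and $\sigma\in\cS_{n}$, then the permuted vector $\sigma\bmr$ also lies in $V$ and has $f(\sigma\bmr)=f(\bmr)$. So injectivity forces every element of $V$ to be fixed by all permutations, i.e.\ to be a constant vector.

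\textbf{Choice of $n$ and $V$.} The simplest choice is $n=1$. Then $\cS_{1}$ is trivial and every subset of $K$ is $\cS_{1}$-invariant, so we take any $V\subseteq K$ with $|V|=s$. Equivalently, for arbitrary $n$ one can take $V=\{(a,\ldots,a)\colon a\in K_{0}\}$ with $K_{0}\subseteq K$ and $|K_{0}|=s$, as in the introduction. Either way we need $|K|\ge s$. This cannot be avoided: if $|K|<s$ and $n=1$, there is no $s$-element subset of $K$. Moreover, any $\cS_{n}$-invariant $V$ with $f$ injective consists of constant vectors, so it has at most $|K|$ elements. I would therefore read the hypothesis ``any nonempty set $K$'' as implicitly requiring $|K|\ge s$, just as the introduction takes $K=\{1,\ldots,s\}$, and I would say so explicitly in the proof.

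\textbf{Counting a class.} Fix $A={}^{t}({}^{t}\bmr_{1},\ldots,{}^{t}\bmr_{m})\in\cX$. Let $B\in\cX$ have rows $\bmr_{1}',\ldots,\bmr_{m}'$. Then $B\overset{\mathrm{r}}{\sim}A$ holds exactly when the multiset $\{f(\bmr_{1}'),\ldots,f(\bmr_{m}')\}$ equals $\row_{f}(A)$. Since $f$ is injective on $V$, each admissible $f(\bmr_{i}')$ determines $\bmr_{i}'$. Hence $[A]_{\mathrm{r}}$ is in bijection with the set of sequences of length $m$ whose multiset of entries is $\row_{f}(A)$. This is the set of distinct row permutations $\{(A_{\tau(i),j})\colon\tau\in\cS_{m}\}$, mirroring the proof of Theorem~\ref{thm:col_identity}. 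By the standard count of arrangements of a multiset, the number of such sequences is $m!/\prod_{a}\mult_{\row_{f}(A)}(a)!$, which is $\binom{m}{\Mult(\row_{f}(A))}$. Note that the entries of $\Mult(\row_{f}(A))$ sum to $m$, so the nonzero case of the definition applies.

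\textbf{Conclusion.} Substituting this count into \eqref{eq:basic_identity} with $\sim\,=\,\overset{\mathrm{r}}{\sim}$ gives the identity. The counting itself is routine. The only real obstacle is the choice of $V$: $\cS_{n}$-invariance and injectivity of $f$ pull against each other, and this is precisely what forces constant vectors and the requirement $|K|\ge s$.
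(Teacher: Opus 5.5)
Your proof is correct and uses the same count as the paper: $|[A]_{\mathrm{r}}|$ equals the number of rearrangements of the tuple $(f(\bmr_{1}),\ldots,f(\bmr_{m}))$, which is $\binom{m}{\Mult(\row_{f}(A))}$. The paper never specifies $V$ and tacitly assumes that a matrix is determined by this tuple, so your explicit choice of constant vectors (making $f$ injective on $V$) and your remark that $|K|\ge s$ is required fill a genuine omission. To make your ``cannot be avoided'' claim airtight, note that the right-hand side always equals $|f(V)|^{m}$, so the identity holds exactly when $f$ is injective on $V$.
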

\begin{proof}[Proof of Theorem \ref{thm:row_f_identity}]
    Let $A={}^{t}({}^{t}\bmr_{1},\ldots,{}^{t}\bmr_{m})\in\cX$.
    Put 
    \[
    \{S_{1},\ldots,S_{l}\}=\supp(\{f(\bmr_{1}),\ldots,f(\bmr_{m})\}).
    \]
    We consider a permutation $\sigma\in\cS_{m}$ that satisfies the following condition. 
    If for any $S\in \{S_{1},\ldots,S_{l}\}$, all rows $\bmr_{i_{1}},\ldots,\bmr_{i_{m'}}$ with $i_{1}<\cdots<i_{m'}$ satisfy $f(\bmr_{i_{j}})=S$ for all $j=1,\ldots,m'$, 
    then $\sigma(i_{1})<\cdots<\sigma(i_{m'})$, i.e. the order of $\bmr_{i_{1}},\ldots,\bmr_{i_{m'}}$ does not change in a matrix ${}^{t}({}^{t}\bmr_{\sigma(1)},\ldots,{}^{t}\bmr_{\sigma(m)})$.
    Since the cardinality $|[A]_{\mathrm{r}}|$ coincides with the number of such permutations $\sigma\in\cS_{m}$, which is equal to the number of rearrangements in a tuple $(f(\bmr_{1}),\ldots,f(\bmr_{m}))$, we have 
    \[
    |[A]_{\mathrm{r}}|=\binom{m}{\Mult(\row_{f}(A))}.
    \]
    Hence, we complete the proof.
\end{proof}
\section{An identity using a sum of products of two multinomial coefficients}
For $m$-by-$n$ matrices $A,B\in\cX$,  
a relation $A\overset{\mathrm{r,c}}{\sim}B$ holds if $A\overset{\mathrm{r}}{\sim}B$ and $A\overset{\mathrm{c}}{\sim}B$.
Let $\cX/{\overset{\mathrm{r,c}}{\sim}}$ denote the set of equivalence classes in $\cX$ with respect to the equivalence relation $\overset{\mathrm{r,c}}{\sim}$,  
and $[A]_{\mathrm{r,c}}$ denote an equivalence class in $\cX/{\overset{\mathrm{r,c}}{\sim}}$ for a matrix $A\in \cX$.

\begin{thm}\label{thm:row_col_identity}
For any positive integers $m$ and $s$ and any nonempty set $K$, there are a positive integer $n$ and an $\cS_{n}$-invariant subset $V\subseteq K^{n}$ with $|V|=s$ such that
    \[
    s^{m}=\sum_{[A]_{\mathrm{r,c}} \in \cX/{\overset{\mathrm{r,c}}{\sim}}}
    \binom{m}{\Mult(\row_{f}(A))}
    \binom{n}{\Mult(\col(A))}.
    \]
\end{thm}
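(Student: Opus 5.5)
The plan is to choose $n$ and $V$ so that one of the two multinomial factors is identically $1$ and the relation $\overset{\mathrm{r,c}}{\sim}$ collapses to one of the relations already handled. Then the statement reduces to Theorem~\ref{thm:col_identity}. The key observation is that $\binom{m}{\Mult(\row_{f}(A))}=1$ exactly when all rows $\bmr_{1},\ldots,\bmr_{m}$ of $A$ have the same multiset $f(\bmr_{i})$. In that case $\Mult(\row_{f}(A))=\{m\}$. This holds for every $A\in\cX$ as soon as all vectors in $V$ are permutations of one fixed vector, and such a $V$ is automatically $\cS_{n}$-invariant.

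\textbf{Construction.} If $s=1$, take $n=1$ and $V=\{(a)\}$ for some $a\in K$. If $s\geq 2$, pick distinct $a,b\in K$, which we assume exist: for $|K|=1$ no $V$ with $|V|=s\geq 2$ exists at all, the same tacit restriction as in the earlier theorems. Take $n=s$ and
\[
V=\cS_{n}(a,b,\ldots,b),
\]
the set of vectors having exactly one entry equal to $a$ and all other entries equal to $b$. It has exactly $n=s$ elements, indexed by the position of $a$, and it is $\cS_{n}$-invariant by construction. In both cases every $\bmr\in V$ has the same $f(\bmr)$.

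\textbf{Reduction.} For any $A,B\in\cX$ we get $\row_{f}(A)=\row_{f}(B)$, since both equal the multiset consisting of $m$ copies of the common $f(\bmr)$. Hence $A\overset{\mathrm{r}}{\sim}B$ always holds, and $\overset{\mathrm{r,c}}{\sim}$ coincides with $\overset{\mathrm{c}}{\sim}$. So $\cX/{\overset{\mathrm{r,c}}{\sim}}=\cX/{\overset{\mathrm{c}}{\sim}}$, and $\binom{m}{\Mult(\row_{f}(A))}=\binom{m}{\{m\}}=1$. The right-hand side of the claimed identity becomes $\sum_{[A]_{\mathrm{c}}}\binom{n}{\Mult(\col(A))}$.

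\textbf{Conclusion.} The argument in the proof of Theorem~\ref{thm:col_identity} gives $|[A]_{\mathrm{c}}|=\binom{n}{\Mult(\col(A))}$ for every $A\in\cX$, and this argument does not depend on the particular $V$. Summing over classes via \eqref{eq:basic_identity} then gives $s^{m}$.

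The only real obstacle is the choice of $V$. One might instead try to prove $|[A]_{\mathrm{r,c}}|=\binom{m}{\Mult(\row_{f}(A))}\binom{n}{\Mult(\col(A))}$ for general $V$, but this fails: for example, constant-row $V$ gives singleton classes but a nontrivial row factor. The construction above sidesteps this, and the remaining verifications are routine.
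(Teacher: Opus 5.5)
Your proof is correct, and it takes a genuinely different route from the paper's.

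\textbf{The paper's route.} The paper never fixes $V$. It tries to prove the per-class formula $|[A]_{\mathrm{r,c}}|=\binom{m}{\Mult(\row_{f}(A))}\binom{n}{\Mult(\col(A))}$ for every $A\in\cX$. It does this by counting row permutations that preserve order inside each block $I_{l'}$, and then asserting $|[A]_{\mathrm{r,c}}|=|[A]_{\mathrm{r}}|\,|[A]_{\mathrm{c}}|$.

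\textbf{Your route.} You choose $V$ to be a single $\cS_{n}$-orbit. That makes the row factor identically $1$ and reduces everything to Theorem~\ref{thm:col_identity}.

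\textbf{Why the paper's route fails.} Your closing remark is right, and it is decisive against the paper's argument. A column permutation changes no $f(\bmr_{i})$, so $\overset{\mathrm{c}}{\sim}$ implies $\overset{\mathrm{r}}{\sim}$; this is exactly Lemma~\ref{lem:implication_between_equivalence_relations}. Hence, for every $\cS_{n}$-invariant $V$,
\[
[A]_{\mathrm{r,c}}=[A]_{\mathrm{r}}\cap[A]_{\mathrm{c}}=[A]_{\mathrm{c}},
\]
so the paper's product formula cannot hold in general. Since $|[A]_{\mathrm{c}}|=\binom{n}{\Mult(\col(A))}$, the right-hand side always equals $\sum_{A\in\cX}\binom{m}{\Mult(\row_{f}(A))}$. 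For $m\ge 2$ this is strictly larger than $s^{m}$ whenever $V$ contains two vectors with different entry multisets. An example is the constant vectors $(i,\ldots,i)$ suggested in the introduction: for $s=m=2$ one gets $6\neq 4$.

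\textbf{What each approach buys.} Had the paper's argument worked, it would give the identity for every $\cS_{n}$-invariant $V$, which is what the sum-of-squares sets of Example 3 would need. That general claim is false. Your single-orbit choice is therefore not just convenient but forced when $m\ge 2$, and it is what actually makes the existential statement true.

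Your caveat that no admissible $V$ exists when $|K|=1$ and $s\ge 2$ is also correct. It is a defect of the statement as written, not of your argument.
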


\begin{lem}\label{lem:implication_between_equivalence_relations}
    Let $A,B\in\cX$. Then 
    $A\overset{\mathrm{c}}{\sim}B$ implies $A\overset{\mathrm{r,c}}{\sim}B$.
\end{lem}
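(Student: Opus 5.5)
The plan is to show that equality of the column multisets forces $B$ to be a column permutation of $A$. Once that is established, the row condition $\row_{f}(A)=\row_{f}(B)$ follows row by row, since $f$ forgets the order of the entries of a vector. Together with the hypothesis $A\overset{\mathrm{c}}{\sim}B$, this gives $A\overset{\mathrm{r,c}}{\sim}B$ by the definition of $\overset{\mathrm{r,c}}{\sim}$.

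\emph{Step 1: $B$ is a column permutation of $A$.} Write $A=(\bmc_{1},\ldots,\bmc_{n})$ and $B=(\bmc'_{1},\ldots,\bmc'_{n})$. By assumption the multisets $\{\bmc_{1},\ldots,\bmc_{n}\}$ and $\{\bmc'_{1},\ldots,\bmc'_{n}\}$ coincide. Equal multisets of the same cardinality $n$ can be matched bijectively, element to equal element: for each vector $\bmc$ in the common support, the positions $j$ with $\bmc_{j}=\bmc$ and those with $\bmc'_{j}=\bmc$ have the same number $\mult_{\col(A)}(\bmc)$, so we may pair them up. Assembling these pairings over the support yields $\sigma\in\cS_{n}$ with $\bmc'_{j}=\bmc_{\sigma(j)}$ for all $j$. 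In entries this reads $B_{i,j}=A_{i,\sigma(j)}$, which is the same description used in the proof of Theorem~\ref{thm:col_identity}.

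\emph{Step 2: the rows agree under $f$.} Let $\bmr_{i}$ and $\bmr'_{i}$ be the $i$-th rows of $A$ and $B$. Then
\[
\bmr'_{i}=(A_{i,\sigma(1)},\ldots,A_{i,\sigma(n)}),
\]
so $f(\bmr'_{i})=\{A_{i,\sigma(1)},\ldots,A_{i,\sigma(n)}\}=\{A_{i,1},\ldots,A_{i,n}\}=f(\bmr_{i})$ as multisets, because $\sigma$ is a bijection of $\{1,\ldots,n\}$. This holds for every $i$, so the tuples $(f(\bmr_{1}),\ldots,f(\bmr_{m}))$ and $(f(\bmr'_{1}),\ldots,f(\bmr'_{m}))$ agree entrywise. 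A fortiori $\row_{f}(A)=\row_{f}(B)$, that is, $A\overset{\mathrm{r}}{\sim}B$.

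\emph{Step 3: conclusion.} Combining $A\overset{\mathrm{r}}{\sim}B$ with the hypothesis $A\overset{\mathrm{c}}{\sim}B$ gives $A\overset{\mathrm{r,c}}{\sim}B$.

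The only point needing care is Step 1, the passage from equality of multisets to a single global permutation $\sigma$. It is handled by matching equal columns class by class, as described. The $\cS_{n}$-invariance of $V$ plays no role in this lemma; it will presumably matter only later, when the lemma is used in proving Theorem~\ref{thm:row_col_identity}.
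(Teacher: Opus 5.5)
Your proof is correct and takes the same route as the paper. Equality of column multisets gives a column permutation, so the tuples $(f(\bmr_{1}),\ldots,f(\bmr_{m}))$ and $(f(\bms_{1}),\ldots,f(\bms_{m}))$ agree entrywise, hence $\row_{f}(A)=\row_{f}(B)$. Your Step 1 simply makes explicit the matching from multiset equality to a permutation $\tau\in\cS_{n}$, which the paper takes for granted.
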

\begin{proof}[Proof of Lemma \ref{lem:implication_between_equivalence_relations}]
Let $A=(A_{i,j})={}^{t}({}^{t}\bmr_{1},\ldots,{}^{t}\bmr_{m})\in\cX$ 
and $B=(B_{i,j})={}^{t}({}^{t}\bms_{1},\ldots,{}^{t}\bms_{m})\in\cX$. 
Then a relation $A\overset{\mathrm{c}}{\sim} B$ holds if and only if 
\begin{equation}\label{eq:f_equality}
(f(\bmr_{1}),\ldots,f(\bmr_{m}))=(f(\bms_{1}),\ldots,f(\bms_{m}))
\end{equation}
and $(A_{i,j})=(B_{i,\tau(j)})$ for some $\tau\in\cS_{n}$.
Moreover, 
\eqref{eq:f_equality} implies $\row_{f}(A)=\row_{f}(B)$. Hence, we complete the proof of the lemma.
\end{proof}

\begin{proof}[Proof of Theorem \ref{thm:row_col_identity}]
Let $A=(A_{i,j})={}^{t}({}^{t}\bmr_{1},\ldots,{}^{t}\bmr_{m})\in\cX$. 
Basically, we need to determine whether 
\begin{equation}\label{eq:membership_r_c}
(A_{\sigma(i),\tau(j)})\in [A]_{\mathrm{r,c}}
\end{equation}
or not for all $\sigma\in\cS_{m}$ and all $\tau\in\cS_{n}$.
The cardinality $|[A]_{\mathrm{r,c}}|$ is the number of matrices $(A_{\sigma(i),\tau(j)})$ without repetition such that each pair $(\sigma,\tau)\in\cS_{m}\times\cS_{n}$ satisfies \eqref{eq:membership_r_c}. 
We show the validity of the definition of the equivalence relation $\overset{\mathrm{r,c}}{\sim}$ and 
\begin{equation}\label{eq:the_cardinality_r_c}
    |[A]_{\mathrm{r,c}}|=\binom{m}{\Mult(\row_{f}(A))}
    \binom{n}{\Mult(\col(A))}.
\end{equation}

\noindent
Put 
    \[
    \{S_{1},\ldots,S_{l}\}=\supp(\{f(\bmr_{1}),\ldots,f(\bmr_{m})\}).
    \]
For each $l'=1,\ldots,l$, we define 
\[
I_{l'}:=\{i\colon f(\bmr_{i})=S_{l'}\}
\]
and let $\sigma_{I_{l'}}$ be a permutation on $I_{l'}$. Then we define a permutation $\sigma\in\cS_{m}$ as 
\begin{equation}\label{eq:permutation_on_a_set_of_c_classes}
\sigma(i)=
\begin{cases}
    \sigma_{I_{l'}}(i) & \text{if $i \in I_{l'}$},\\
    i & \text{otherwise}.
\end{cases}
\end{equation}
For a permutation $\sigma\in\cS_{m}$ defined by \eqref{eq:permutation_on_a_set_of_c_classes}, 
a mapping $[(A_{i,j})]_{\mathrm{c}}\mapsto [(A_{\sigma(i),j})]_{\mathrm{c}}$ defines 
a permutation on the set $\cX/{\overset{\mathrm{c}}{\sim}}$. 
This statement also holds for any permutation $\sigma$ which is a composition of permutations defined in \eqref{eq:permutation_on_a_set_of_c_classes} for several $l'$. 
From Lemma \ref{lem:implication_between_equivalence_relations}, 
if two classes $[(A_{i,j})]_{\mathrm{c}}$ and $[(A_{\sigma(i),j})]_{\mathrm{c}}$ are different, then so are
two classes $[(A_{i,j})]_{\mathrm{r,c}}$ and $[(A_{\sigma(i),j})]_{\mathrm{r,c}}$.
Notice that the choice of a matrix $A\in\cX$ is arbitrary. 
To count all permutations $\sigma\in\cS_{m}$ up to $[(A_{i,j})]_{\mathrm{r,c}}=[(A_{\sigma(i),j})]_{\mathrm{r,c}}$, 
we may consider all permutations $\sigma\in\cS_{m}$ such that for any $S\in\{S_{1},\ldots,S_{l}\}$, 
\begin{equation}\label{eq:the_order_of_rows_permutation}
\sigma(i_{1})<\cdots<\sigma(i_{m'}) \quad\text{when $I_{l'}=\{i_{1}<\cdots<i_{m'}\}$.} 
\end{equation}
Notice that indices $i$ and $j$ of rows in a matrix $A={}^{t}({}^{t}\bmr_{1},\ldots,{}^{t}\bmr_{m})$ with $i<j$ does not necessarily imply $\sigma(i)<\sigma(j)$ for a permutation $\sigma\in\cS_{m}$ satisfying \eqref{eq:the_order_of_rows_permutation} when $f(\bmr_{i})=S$ and $f(\bmr_{j})=T$ for distinct $S,T\in\{S_{1},\ldots,S_{l}\}$
The number of permutations $\sigma\in\cS_{m}$ satisfying \eqref{eq:the_order_of_rows_permutation} coincides with the number of rearrangements in a tuple $(f(\bmr_{1}),\ldots,f(\bmr_{m}))$, 
which is equal to 
\[
\binom{m}{\Mult(\row_{f}(A))}.
\]

\noindent
Hence, we have
\begin{align*}
|[A]_{\mathrm{r,c}}|=&\left|\{(f(\bmr_{\sigma(1)}'),\ldots,f(\bmr_{\sigma(m)}'))\colon {}^{t}({}^{t}\bmr_{1}',\ldots,{}^{t}\bmr_{m}')\in [A]_{\mathrm{c}},\sigma\in\cS_{m}\}\right|\\
=&\left|[A]_{\mathrm{r}}\right| \left|[A]_{\mathrm{c}}\right|\\
=&\binom{m}{\Mult(\row_{f}(A))}\binom{n}{\Mult(\col(A))}
\end{align*}
for each matrix $A\in\cX$.

\end{proof}

\section{An identity where $K$ is a group}
In this section, let $K$ denote a group, which is multiplicatively written. For a vector $\bmr=(r_{1},\ldots,r_{n})\in K^{n}$, 
let $\bmr^{-1}$ denote $(r_{1}^{-1},\ldots,r_{n}^{-1})$ and let $\bmr^{1}$ denote $(r_{1},\ldots,r_{n})$.
For a matrix $A={}^{t}({}^{t}\bmr_{1},\ldots,{}^{t}\bmr_{m})$, 
let $t(A)$ denote the cardinality of the set $\{i \colon f(\bmr_{i})\not=f(\bmr_{i}^{-1})\}$. 
For vectors $\bmr_{1},\bmr_{2}\in K^{n}$, 
the relation $\bmr_{1}\overset{\mathrm{s}}{\sim} \bmr_{2}$ holds if $f(\bmr_{1})=f(\bmr_{2})$ or $f(\bmr_{1})=f(\bmr_{2}^{-1})$. 
For matrices 
$A={}^{t}({}^{t}\bmr_{1},\ldots,{}^{t}\bmr_{m})\in\cX$ 
and 
$B={}^{t}({}^{t}\bms_{1},\ldots,{}^{t}\bms_{m})\in\cX$, 
a relation $A\overset{\mathrm{rs}}{\sim}B$ holds if 
there exists a permutation $\sigma\in\cS_{m}$ such that
$\bmr_{i}\overset{\mathrm{s}}{\sim} \bms_{\sigma(i)}$ for all $i=1,\ldots,m$.
For matrices $A,B\in\cX$, 
a relation $A\overset{\mathrm{rs,c}}{\sim}B$ holds if 
$A\overset{\mathrm{rs}}{\sim}B$ and $A\overset{\mathrm{c}}{\sim}B$.
Let $\cX\big/{\overset{\mathrm{rs,c}}{\sim}}$ denote the set of equivalence classes in $\cX$ with respect to the equivalence relation $\overset{\mathrm{rs,c}}{\sim}$,  
and $[A]_{\mathrm{rs,c}}$ denote an equivalence class in $\cX\big/{\overset{\mathrm{rs,c}}{\sim}}$ for a matrix $A\in \cX$.

\begin{thm}\label{thm:row_sym_col_identity}
For any positive integers $m$ and $s$ and any group $K$, there are a positive integer $n$ and an $\cS_{n}$-invariant subset $V\subseteq K^{n}$ with $|V|=s$ such that
    \[
    s^{m}=\sum_{[A]_{\mathrm{rs,c}} \in \cX\big/{\overset{\mathrm{rs,c}}{\sim}}}
    2^{t(A)}
    \binom{m}{\Mult(\row_{f}(A))}
    \binom{n}{\Mult(\col(A))},
    \]
\end{thm}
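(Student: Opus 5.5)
The statement is existential in $n$ and $V$, so I will choose them to make the counting transparent. As in the Introduction, I assume $K$ has enough elements for a $V$ with $|V|=s$ to exist at all. I will take $V$ to be $\cS_{n}$-invariant and closed under inversion, $V^{-1}=V$. Then $\cX$ is stable under inverting any single row.

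The plan is to prove, for every $A\in\cX$, that
\[
|[A]_{\mathrm{rs,c}}|=2^{t(A)}\binom{m}{\Mult(\row_{f}(A))}\binom{n}{\Mult(\col(A))},
\]
and then sum over classes using \eqref{eq:basic_identity}. The approach is to compare the class $[A]_{\mathrm{rs,c}}$ with classes $[B]_{\mathrm{r,c}}$, whose sizes are given by \eqref{eq:the_cardinality_r_c} in the proof of Theorem~\ref{thm:row_col_identity}.

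\textbf{Step 1: decomposition into r,c-classes.} I first observe that $\overset{\mathrm{r,c}}{\sim}$ refines $\overset{\mathrm{rs,c}}{\sim}$, since $\row_{f}(A)=\row_{f}(B)$ gives a matching of rows with $f(\bmr_{i})=f(\bms_{\sigma(i)})$. Hence $[A]_{\mathrm{rs,c}}$ is a disjoint union of classes $[B]_{\mathrm{r,c}}$. For a subset $J$ of the $t(A)$ rows with $f(\bmr_{i})\neq f(\bmr_{i}^{-1})$, let $A^{J}$ be $A$ with the rows in $J$ inverted; it lies in $\cX$ because $V^{-1}=V$. I will show that the $2^{t(A)}$ matrices $A^{J}$ represent exactly the r,c-classes inside $[A]_{\mathrm{rs,c}}$. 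Distinct $J$ give distinct multisets $\row_{f}$, hence distinct r,c-classes.

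\textbf{Step 2: sizes of the pieces.} Each piece has size $|[A^{J}]_{\mathrm{r}}|\,|[A^{J}]_{\mathrm{c}}|$ by the argument of Theorem~\ref{thm:row_col_identity}. I then need $\Mult(\row_{f}(A^{J}))=\Mult(\row_{f}(A))$ and $\Mult(\col(A^{J}))=\Mult(\col(A))$, so that all $2^{t(A)}$ pieces have the same size. For rows this should follow by pairing each multiset $S$ with $S^{-1}$. For columns, inverting a set of rows acts by one fixed entrywise map on every column vector. That map is injective, so it preserves the multiplicity pattern.

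\textbf{Main obstacle.} The hard part is reconciling the definition, which imposes $\overset{\mathrm{c}}{\sim}$ with the \emph{original} columns, with the flips $A^{J}$, which change the columns. Strictly, $A^{J}\overset{\mathrm{c}}{\sim}A$ may fail, and then the decomposition in Step 1 would break down. My first attempt is to exploit the existential freedom to rule this out by choosing $V$ so that $f(\bmr)=f(\bmr^{-1})$ for all $\bmr\in V$. For example, take $n=2$ and vectors $(a,a^{-1})$ together with their transposes, choosing the elements $a$ so that the orbit sizes add up to $s$, with an involution or the identity fixing parity. With this choice $t(A)=0$ identically and $\overset{\mathrm{rs}}{\sim}$ coincides with $\overset{\mathrm{r}}{\sim}$. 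Theorem~\ref{thm:row_sym_col_identity} then reduces directly to Theorem~\ref{thm:row_col_identity}. I would present this reduction as the proof and keep the general flip-counting of Steps 1--2 as the intended interpretation of the factor $2^{t(A)}$.
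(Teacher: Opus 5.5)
\textbf{The gap is in your final step.} When $t\equiv 0$ on $V$, the sum in Theorem~\ref{thm:row_sym_col_identity} does coincide with the sum in Theorem~\ref{thm:row_col_identity} for the same $V$. But Theorem~\ref{thm:row_col_identity} is only an existence statement, so it gives no identity for \emph{your} $V$. What you actually rely on is the per-class formula \eqref{eq:the_cardinality_r_c}, and that formula is false.

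A column permutation permutes the entries of each row. So if $\col(A)=\col(B)$, with rows $\bmr_i$ of $A$ and $\bms_i$ of $B$, then $f(\bmr_i)=f(\bms_i)$ for every $i$; this is the content of Lemma~\ref{lem:implication_between_equivalence_relations}. Hence $\overset{\mathrm{c}}{\sim}$ already implies $\overset{\mathrm{r}}{\sim}$ and $\overset{\mathrm{rs}}{\sim}$, and $[A]_{\mathrm{rs,c}}=[A]_{\mathrm{r,c}}=[A]_{\mathrm{c}}$, whose size is $\binom{n}{\Mult(\col(A))}$ alone. Consequently, for every $\cS_n$-invariant $V$ the right-hand side equals
\[
\sum_{A\in\cX}2^{t(A)}\binom{m}{\Mult(\row_{f}(A))}.
\]
This is a sum of $s^m$ terms, each at least $1$, so the identity holds exactly when every term equals $1$.

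Your $n=2$ construction has at least two $\cS_2$-orbits once $s\ge 3$, and then it fails for $m\ge 2$. Take $e$ the identity of $K$, $V=\{(e,e),(a,a^{-1}),(a^{-1},a)\}$ and $m=2$. The five matrices with both rows in one orbit contribute $1$ each, and the four mixed ones contribute $2$ each, so the right-hand side is $5\cdot 1+4\cdot 2=13\neq 9$.

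The same observation breaks Steps 1--2. The relation $A^{J}\overset{\mathrm{c}}{\sim}A$ fails for \emph{every} nonempty $J$, not just sometimes, because it would force $f(\bmr_i^{-1})=f(\bmr_i)$ for $i\in J$. So $[A]_{\mathrm{rs,c}}$ never splits into $2^{t(A)}$ pieces. The paper's own argument writes $[A]_{\mathrm{rs,c}}$ as a union over sign vectors of $\overset{\mathrm{r,c}}{\sim}$-classes of sign-flipped matrices, exactly as in your Step 1, and is open to the same objection.

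What does work, for $m\ge 2$, is to take $V$ to be a \emph{single} $\cS_n$-orbit: all arrangements of one multiset $S$ with $S^{-1}=S$. Then every term above is $1$, and the sum is $\sum_{[A]_{\mathrm{c}}}|[A]_{\mathrm{c}}|=s^m$. If $K$ has an involution $b$, take $n=s$ and the arrangements of $(b,e,\ldots,e)$.

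If $K$ has no involution, this is impossible for odd $s\ge 3$. Such an orbit with $S\neq\{e,\ldots,e\}$ contains some $a\neq e$, and $a\neq a^{-1}$. Exchanging every $a$ with $a^{-1}$ is then a fixed-point-free involution on the orbit, so the orbit has size $1$ or even. Hence for odd $s\ge 3$ and $m\ge 2$ no admissible $V$ exists. The statement is in fact false, for example for $K=\bbZ/3\bbZ$, $m=2$, $s=3$, so no proof can cover every group.
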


\begin{lem}\label{lem:implication_between_equivalence_relations_r_c_to_rs_c}
    Let $A,B\in\cX$. Then 
    $A\overset{\mathrm{r,c}}{\sim}B$ implies $A\overset{\mathrm{rs,c}}{\sim}B$.
\end{lem}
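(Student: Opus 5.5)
The plan is to unfold the definitions and verify the two components of $\overset{\mathrm{rs,c}}{\sim}$ separately. Let $A={}^{t}({}^{t}\bmr_{1},\ldots,{}^{t}\bmr_{m})\in\cX$ and $B={}^{t}({}^{t}\bms_{1},\ldots,{}^{t}\bms_{m})\in\cX$, and assume $A\overset{\mathrm{r,c}}{\sim}B$. By definition this means $A\overset{\mathrm{r}}{\sim}B$ and $A\overset{\mathrm{c}}{\sim}B$. The goal is to show $A\overset{\mathrm{rs}}{\sim}B$ and $A\overset{\mathrm{c}}{\sim}B$.

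The column part needs no work: $A\overset{\mathrm{c}}{\sim}B$ is already one of the hypotheses and is reused verbatim.

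For the row part, use $A\overset{\mathrm{r}}{\sim}B$, i.e.\ the multisets $\{f(\bmr_{1}),\ldots,f(\bmr_{m})\}$ and $\{f(\bms_{1}),\ldots,f(\bms_{m})\}$ coincide. Equality of two multisets of the same cardinality $m$ means exactly that there is a bijection $\sigma\in\cS_{m}$ with $f(\bmr_{i})=f(\bms_{\sigma(i)})$ for all $i$. To make this explicit, match, for each support element $S_{l'}$, the indices $I_{l'}$ for $A$ with the corresponding index set for $B$; these have equal sizes because the multiplicities agree. Then $f(\bmr_{i})=f(\bms_{\sigma(i)})$ is the first alternative in the definition of $\overset{\mathrm{s}}{\sim}$, so $\bmr_{i}\overset{\mathrm{s}}{\sim}\bms_{\sigma(i)}$ for every $i$, which is precisely $A\overset{\mathrm{rs}}{\sim}B$.

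Combining the two parts gives $A\overset{\mathrm{rs,c}}{\sim}B$. The only step needing any care is the passage from multiset equality to the explicit permutation $\sigma$, and that is handled by the blockwise matching above. The inverse alternative $f(\bmr_{1})=f(\bmr_{2}^{-1})$ and the group structure of $K$ are not needed; the lemma holds because $\overset{\mathrm{rs}}{\sim}$ weakens $\overset{\mathrm{r}}{\sim}$.
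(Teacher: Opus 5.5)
Your proof is correct and follows the same route as the paper. Both unfold $\overset{\mathrm{r,c}}{\sim}$, turn $\row_{f}(A)=\row_{f}(B)$ into a permutation $\sigma\in\cS_{m}$ with $f(\bmr_{i})=f(\bms_{\sigma(i)})$, recognize this as the first alternative in the definition of $\overset{\mathrm{rs}}{\sim}$, and carry the column condition over unchanged; your blockwise matching of the index sets $I_{l'}$ only spells out the multiset-to-permutation step that the paper states without comment.
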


\begin{proof}[Proof of Lemma \ref{lem:implication_between_equivalence_relations_r_c_to_rs_c}]
Let $A=(A_{i,j})={}^{t}({}^{t}\bmr_{1},\ldots,{}^{t}\bmr_{m})\in\cX$ 
and $B=(B_{i,j})={}^{t}({}^{t}\bms_{1},\ldots,{}^{t}\bms_{m})\in\cX$. 
Then $A\overset{\mathrm{r,c}}{\sim}B$ holds if and only if $\row_{f}(A)=\row_{f}(B)$  and $\col(A)=\col(B)$. 
Moreover, $\row_{f}(A)=\row_{f}(B)$ holds if and only if there exists a permutation $\sigma\in\cS_{m}$ such that $f(\bmr_{i})=f(\bms_{\sigma(i)})$ holds for all $i=1,\ldots,m$.

$A\overset{\mathrm{rs}}{\sim}B$ holds if and only if 
there exists a permutation $\sigma\in\cS_{m}$ such that 
$f(\bmr_{i})=f(\bms_{\sigma(i)})$ or $f(\bmr_{i})=f(\bms_{\sigma(i)}^{-1})$ holds
for all $i=1,\ldots,m$.
Hence, $A\overset{\mathrm{r,c}}{\sim}B$ implies $A\overset{\mathrm{rs,c}}{\sim}B$. 
\end{proof}

\begin{proof}[Proof of Theorem \ref{thm:row_sym_col_identity}]
    From Lemma \ref{lem:implication_between_equivalence_relations_r_c_to_rs_c} and the definitions of $\overset{\mathrm{r,c}}{\sim}$ and $\overset{\mathrm{rs,c}}{\sim}$, 
    we have 
    \[
    [{}^{t}({}^{t}\bmr_{1},\ldots,{}^{t}\bmr_{m})]_{\mathrm{rs,c}}
    =\bigcup_{(\eps_{1},\ldots,\eps_{m})\in\{-1,1\}^{m}}[{}^{t}({}^{t}\bmr_{1}^{\eps_{1}},\ldots,{}^{t}\bmr_{m}^{\eps_{m}})]_{\mathrm{r,c}}.
    \]
    Notice that the above union does not always give a partition. To give a partition of $[{}^{t}({}^{t}\bmr_{1},\ldots,{}^{t}\bmr_{m})]_{\mathrm{rs,c}}$, 
    we may define some subset $\cE\subseteq\{-1,1\}^{m}$ such that 
    \[
    [{}^{t}({}^{t}\bmr_{1},\ldots,{}^{t}\bmr_{m})]_{\mathrm{rs,c}}
    =\bigcup_{(\eps_{1},\ldots,\eps_{m})\in\cE}[{}^{t}({}^{t}\bmr_{1}^{\eps_{1}},\ldots,{}^{t}\bmr_{m}^{\eps_{m}})]_{\mathrm{r,c}}
    \]
    gives a partition, where 
    \[
    [{}^{t}({}^{t}\bmr_{1}^{\eps_{1}},\ldots,{}^{t}\bmr_{m}^{\eps_{m}})]_{\mathrm{r,c}}=[{}^{t}({}^{t}\bmr_{1}^{\eps_{1}'},\ldots,{}^{t}\bmr_{m}^{\eps_{m}'})]_{\mathrm{r,c}}
    \]
    with $(\eps_{1},\ldots,\eps_{m})$, $(\eps_{1}',\ldots,\eps_{m}')\in\cE$ implies 
    $(\eps_{1},\ldots,\eps_{m})=(\eps_{1}',\ldots,\eps_{m}')$.
\end{proof}


\section{Examples}

In previous sections, we gave four combinatorial identities that represent powers of positive integers.
In this section, we give examples of power of integers and the corresponding choices of sets $K$ and $V$. 

\paragraph*{Example 1}
Let $K=\{0,1\}$ and 
\[
V=\{(x_{1},\ldots,x_{n})\in K^{n}\colon |\{i\colon x_{i}=1\}|=h\}.
\]
Then the identities represent the power of the binomial coefficient $\binom{n}{h}^{m}$, 
where Theorem \ref{thm:row_sym_col_identity} requires a set $K=\{0,1\}$ to be an additive group (modulo $2$). 

\paragraph*{Example 2}
Let $K=\{0,1,\ldots,q-1\}$ be a $q$ element set. For all $a\in K$, let $h_{a}$ be nonnegative integers with $\sum_{a\in K}h_{a} = n$.   
Let 
\[
V=\{(x_{1},\ldots,x_{n})\in K^{n}\colon |\{i\colon x_{i}=a\}|=h_{a}\}.
\]
Then the identities represent the power of the multinomial coefficient $\binom{n}{\{h_{a}\colon a\in K\}}^{m}$, 
where Theorem \ref{thm:row_sym_col_identity} requires a set $K=\{0,1,\ldots,q-1\}$ to be an additive group (modulo $q$). 

\paragraph*{Example 3}
Let $K=\bbZ$ be the ring of integers. For a nonnegative integer $h$, let 
\[
V=\{(x_{1},\ldots,x_{n})\in K^{n}\colon \sum_{i=1}^{n}x_{i}^{2}=h\}
\]
and put $r_{n}(h)=|V|$, which is the number of representations of $h$ as sums of $n$ squares of integers.
Then the identities represent the power $r_{n}(h)^{m}$. 
\section{Concluding remarks}
If one gives a combinatorial identity, then it is natural to consider its weighted sum. 
In our cases, one can consider a situation to count matrices in $\cX$ of a prescribed rank. Then it is sufficient that $K$ is a ring or a field, and a weight function on matrices is defined to be $1$ if the rank of a matrix coincides a prescribed rank, $0$ otherwise.
On the other hand, since the classical Vandermonde's convolution gives rise to the probability distribution (called the hypergeometric distribution), so do our identities.
Especially, a sample space in a probability space is $\cX/{\sim}$, where $\sim$ is 
either $\overset{\mathrm{c}}{\sim}$, $\overset{\mathrm{r}}{\sim}$, 
$\overset{\mathrm{r,c}}{\sim}$ or $\overset{\mathrm{rs,c}}{\sim}$. 
If one considers a real-valued random variable on this probability space, 
then a weighted sum appears naturally in its expectation.

Recall that a matrix is a $2$-dimensional array. Hence, it is natural to extend a set $\cX$ of matrices to a set of tensors, i.e. $k$-dimensional arrays for a positive integer $k$. 
In the tensor setting, it is natural to make combinatorial identities. 
However, it will be complicated to consider permutations of indices of tensors.  We leave this problem for future work.
\printbibliography
\end{document}